\newtheorem{thm}{Theorem}[section]
\newtheorem{lem}[thm]{Lemma}
\newtheorem{pro}[thm]{Proposition}
\newtheorem{exa}[thm]{Example}
\newtheorem{cor}[thm]{Corollary}
\newcommand{\Aut}{{\mathrm{Aut}}\;}
\newcommand{\al}{\alpha}
\newcommand{\be}{\beta}
\newcommand{\ga}{\gamma}
\newcommand{\om}{\omega}
\newcommand{\Ta}{\Theta}
\newcommand{\Tap}{\Theta^{\perp}}
\newcommand{\La}{\Lambda}
\newcommand{\de}{{\delta}}
\newcommand{\la}{{\lambda}}
\newcommand{\Ga}{\Gamma}
\newcommand{\Up}{\Upsilon}
\newcommand{\ph}{\widehat{p}}
\newcommand{\eh}{\widehat{e}}
\newcommand{\hh}{\widehat{h}}
\newcommand{\sh}{\widehat{s}}
\newcommand{\xh}{\widehat{x}}
\newcommand{\Bh}{\widehat{B}}
\newcommand{\vep}{\varepsilon}
\newcommand{\ld}{\ldots}
\newcommand{\cd}{\cdots}
\newcommand{\bp}{\mathbf{p}}
\newcommand{\bq}{\mathbf{q}}
\newcommand{\bze}{\mathbf{0}}
\newcommand{\bph}{\mathbf{{\widehat{p}}}}
\newcommand{\cC}{\mathcal{C}}
\newcommand{\cP}{\mathcal{P}}
\newcommand{\cR}{\mathcal{R}}
\newcommand{\fS}{\mathfrak{S}}
\newcommand{\bbQ}{\mathbb{Q}}
\newcommand{\bbZ}{\mathbb{Z}}
\newcommand{\del}{\partial}
\newcommand{\dmrjdel}[1]{}
\title{Symmetric functions, codes of partitions and the KP hierarchy}
\author[S.~R.~Carrell]{S.~R.~Carrell$^*$}
\author[I.~P.~Goulden]{I.~P.~Goulden$^\dagger$}
\thanks{
${\hspace{-1ex}}^*$Department of Combinatorics and Optimization,
                                                University of Waterloo, Waterloo, Ontario, Canada.;    \\
${\hspace{.35cm}}$ \texttt{srcarrell@uwaterloo.ca}}
\thanks{
${\hspace{-1ex}}^\dagger$Department of Combinatorics and Optimization,
                                                University of Waterloo, Waterloo, Ontario, Canada.;  \\
${\hspace{.35cm}}$ \texttt{ipgoulden@uwaterloo.ca}}
\date{February 25, 2009}
\begin{document}
\maketitle

\begin{abstract}
We consider an operator of Bernstein for symmetric functions, and give an explicit
formula for its action on an arbitrary Schur function. This formula is given
in a remarkably simple form when written in terms of some notation
based on the code of a partition. As an
application, we give a new and very simple proof of a classical result for
the KP hierarchy, which involves the Pl\"ucker relations for Schur function coefficients
in a $\tau$-function for the hierarchy. This proof is especially compact because
of a restatement that we give for the Pl\"ucker relations that is symmetrical in
terms of partition code notation.
\end{abstract}


\section{Introduction}\label{sec1}

In Macdonald's fundamental book~\cite[p.~95]{mac} on symmetric functions, he considers 
the operators $B(t)$, $B_n$ given by
\begin{equation}\label{Bernp}
B(t):=\sum_{n\in\bbZ}B_nt^n:=\exp\left( \sum_{k \geq 1} \frac{t^k}{k} p_k\right)
\exp\left( -\sum_{k \geq 1} t^{-k} \frac{\del}{\del p_k}\right),
\end{equation}
which he attributes to Bernstein~\cite[p.~69]{z}.
Here $t$ is an indeterminate, and $p_k$ is the $k$th power sum symmetric function
of a countable set of indeterminates (in which the power sums are symmetric). 
Of course, since the power sum symmetric functions $p_1,p_2,\ldots$ in
a countable set of indeterminates are algebraically independent (see Section~\ref{sec4} for
this and other basic results about symmetric functions), we can regard $p_1,p_2,\ldots$ as
indeterminates.  Hence we consider Bernstein's operators $B(t)$, $B_n$ applied to
formal power series in $p_1,p_2,\ldots$. Let the \emph{weight} of the
monomial $p_1^{i_1}p_2^{i_2}\cdots$ be given by $i_1+2i_2+\ldots$. Then note that as a series in $t$, $B(t)$ goes
to infinity in both directions, but that when it is applied to a monomial of
weight $m$ in $p_1,p_2,\ldots$, the result is a Laurent series with minimum degree $-m$ in $t$.

The main result of this paper is an explicit expansion for the Laurent series that
results from applying $B(t)$ to an arbitrary Schur symmetric function (where the Schur function
is written as a polynomial in $p_1,p_2,\ldots$ with symmetric group characters
as coefficients, as given in~\eqref{psandsp}).
This result, appearing as Theorem~\ref{Bsla}, is given in a remarkably simple
form by using an indexing notation for partitions that is natural in
terms of the \emph{code} of the partition. 

For an application of our main result, we turn to the KP hierarchy of mathematical
physics. By combining our main result and its dual
form, we are able to give a  new, simple proof (in Theorem~\ref{SchPlu}) of
the classical result for the KP hierarchy that relates
the Schur symmetric function expansion of a $\tau$-function for the hierarchy
and the Pl\"ucker relations 
for the coefficients in this expansion.
This proof is made especially compact by our restatement (as Theorem~\ref{sympluck}) of
the Pl\"ucker relations,
in a highly symmetrical form
in terms of the code notation.

The KP (Kadomtsev-Petviashvili) hierarchy is a completely integrable hierarchy
that generalizes the KdV hierarchy (an integrable hierarchy is a family of partial differential
equations which are simultaneously solved;
for a comprehensive and concise account of
the KP hierarchy with an emphasis on the view of physics, see Miwa, Jimbo and Date~\cite{mjd}).
Over the last two
decades, there has been strong interest in the relationship between integrable
hierarchies and moduli spaces of curves. This began with Witten's Conjecture~\cite{w} for
the KdV equations, proved by Kontsevich~\cite{ko} (and more recently by a number of
others, including~\cite{kl}). Pandharipande~\cite{p} conjectured that solutions to
the Toda equations arose in a related context, which was proved by Okounkov~\cite{o}, who
also proved the more general result that a generating series for what he
called \emph{double Hurwitz numbers} satisfies the KP hierarchy.  Kazarian
and Lando~\cite{kl}, in their recent proof of Witten's conjecture, showed that
it is implied by Okounkov's result for double Hurwitz numbers. More
recently, Kazarian~\cite{ka} has given a number of
interesting results
about the structure of solutions to the KP hierarchy.
Combinatorial aspects of this connection have been studied by Goulden and Jackson~\cite{gj}.

The structure of this paper is as follows.
In Section~\ref{sec2}, we describe the notation for partitions (which index symmetric
functions), their diagrams and codes (in terms of the physics presentation
in~\cite{mjd}, the code is
equivalent to a \emph{Maya diagram} without the location parameter of
charge). In Section~\ref{sec4} we describe the algebra of symmetric functions,
and give our main result, with a combinatorial proof in terms of the  diagrams of partitions.
Section~\ref{sec3} contains our symmetrical restatement
of the Pl\"ucker relations.
Finally, in Section~\ref{sec5}, we give our new proof of the Schur-Pl\"ucker result for
the KP hierarchy, together with a description of an equivalent system of
partial differential equations for the KP hierarchy in terms of codes of partitions.

\section{Partitions and codes}\label{sec2}

We begin with some notation for partitions (see \cite{mac}, \cite{s}). 
If  $\la_1,\ld ,\la_n$ are integers with $\la_1\ge \cd\ge\la_n\ge 1$ and $\la_1+\cd +\la_n=d$,  then $\la=(\la_1,\ld ,\la_n)$ is said to be a \emph{partition} of $|\la|:=d$ (indicated by writing $\la\vdash d$) with $l(\la):=n$ \emph{parts}. The empty list $\vep$ of integers is to be regarded as a partition of $d=0$ with $n=0$ parts, and let $\cP$ denote the set of all partitions. If $\la$ has $f_j$ parts equal to $j$ for $j=1,\ld ,d$, then we also write $\la=(d^{f_d},\ld ,1^{f_1})$, where convenient. Also, $\Aut\la$ denotes the set of permutations of the $n$ positions that fix $\la$; therefore  $|\Aut\la |=\prod_{j\ge 1}f_j!$.
The \emph{conjugate} of $\la$ is the partition $\la'=(\la'_1,\ld ,\la'_m)$,
where $m=\la_1$, and $\la'_i$ is equal to the number of
parts $\la_j$ of $\la$ with $\la_j\ge i$, $i=1,\ld ,m$.
The \emph{diagram} of $\la$ is a left-justified array
of juxtaposed unit squares, with $\la_i$ squares in the $i$th row. For example, the
diagram of the partition $(6,5,5,4,1)$ is given on the left hand side
of Figure~\ref{diagbdy}. The diagram gives a compact geometrical description of the conjugate; the
diagram of $\la'$ is obtained by reflecting the diagram of $\la$ about the main diagonal.
\begin{figure}[ht]
\begin{center}
\scalebox{.60}{\includegraphics{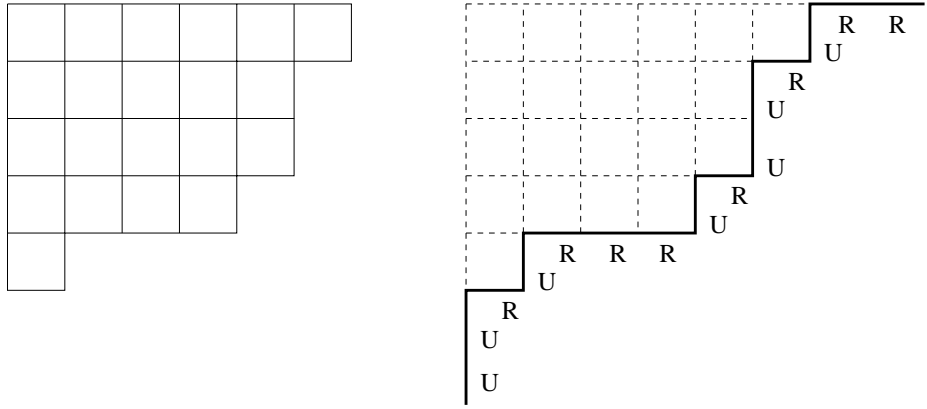}}
\end{center} 
\caption{The diagram and code for the partition $(6,5,5,4,1)$.}\label{diagbdy}
\end{figure}

The \emph{code} of a partition is a two-way infinite binary string in
two symbols, say U, R. The string is infinitely U to the left, and infinitely R to the right, and
represents a lattice path with unit up-steps (for "U") and unit right-steps (for "R"); the path starts
moving up the $y$-axis towards the origin, and ends moving out the $x$-axis away from the origin, and
forms the lower right hand boundary of the diagram of the partition, when the diagram is
placed with its upper left hand corner at the origin. An example
is given on the right hand side
of Figure~\ref{diagbdy}, which gives us the string ...UURURRRURUURURR... as
the code of the partition $(6,5,5,4,1)$ (see, e.g.,~\cite[p. 467]{s} for more on codes;
in~\cite{s}, the
symbols in the code are $0$, $1$, but we prefer U, R as more mnemonic.)

We let $\la^{(i)}$, $i\ge 1$, be the partition whose code is obtained from the code of
the partition $\la$ by switching the $i$th R (from the left) to U. If $\la=(\la_1,\ld ,\la_n)$, then
we immediately have
\begin{equation}\label{formisw}
\la^{(i)}=(\la_1-1,\ld ,\la_j-1,i-1,\la_{j+1},\ld \la_n),
\end{equation}
where $j$ is chosen (uniquely) from $0,\ld ,n$ so that $\la_j\ge i>\la_{j+1}$ (with the conventions
that $\la_{n+1}=0$ and $\la_0=\infty$). Now define $u_i(\la)$ to be the
number of up-steps U that
follow the $i$th right-step R from the left in the code of $\la$. Then note
that $u_i(\la)=j$, and that we have $|\la^{(i)}|=|\la|-j+i-1$ from~\eqref{formisw}, so
we can determine $u_i(\la)=j$ in terms of $i$ via
\begin{equation}\label{ij}
u_i(\la)=|\la|-|\la^{(i)}|+i-1.
\end{equation} 
Also note that $u_i(\la)$ weakly
decreases as $i$ increases, so we also obtain
\begin{equation}\label{laiinc}
|\la|-l(\la)=|\la^{(1)}|<|\la^{(2)}|<\cdots,\qquad
|\la^{(i)}|=|\la|+i-1,\quad i>\la_1.
\end{equation}

We also let $\la^{(-i)}$, $i\ge 1$, be the partition whose code is
obtained from the code of
the partition $\la$ by switching the $i$th U (from the right) to R. If $\la=(\la_1,\la_2,\ld )$,
with $\la_1\ge\la_2\ld\ge 0$ (\emph{i.e.}, $\la$ has a finite number, say $n$, of positive parts,
and then append an infinite sequence of $0$'s), then
we immediately have
\begin{equation}\label{formminisw}
\la^{(-i)}=(\la_1+1,\ld ,\la_{i-1}+1,\la_{i+1},\ld ).
\end{equation}
Thus we have $|\la^{(-i)}|=|\la|-\la_i+i-1$, and since $\la_i$  weakly decreases as $i$ increases,
we obtain
\begin{equation}\label{laminiinc}
|\la|-\la_1=|\la^{(-1)}|<|\la^{(-2)}|<\cdots,\qquad
|\la^{(-i)}|=|\la|+i-1,\quad i>l(\la).
\end{equation}

Finally, we define  skew diagrams. For partitions $\la=(\la_1,\ld ,\la_n)$ and $\mu=(\mu_1,\ld ,\mu_m)$,
with $n\ge m$ and $\la_i\ge\mu_i$ for $i=1,\ld ,m$, the \emph{skew} diagram $\la -\mu$ is
the array of unit squares obtained by removing the squares of the diagram of $\mu$ from
the diagram of $\la$. For example, the skew diagram $(6,5,5,4,1)-(3,3,2,1)$ is given
in Figure~\ref{skewdiag}.
\begin{figure}[ht]
\begin{center}
\scalebox{.60}{\includegraphics{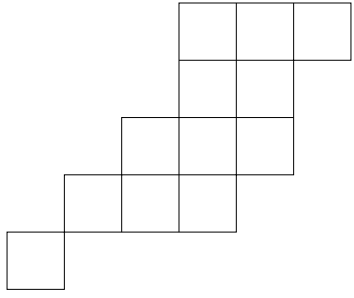}}
\end{center}
\caption{The skew diagram $(6,5,5,4,1)-(3,3,2,1)$.}\label{skewdiag}
\end{figure}

A skew diagram is a \emph{horizontal strip} if it contains at most one square
in every column; it is a \emph{vertical strip} if it contains at most one square 
in every row. When the total number of squares in a horizontal (or vertical) strip
is equal to $k$, we may refer to this skew diagram as a horizontal (or vertical) $k$-strip.

\section{Codes of partitions and symmetric functions}\label{sec4}

\subsection{The algebra of symmetric functions}

We consider symmetric functions in $x_1,x_2,\ld$, and refer to~\cite{s},~\cite{mac} for the results that we use in this paper.
The $i$th \emph{power sum} symmetric function is  $p_i=\sum_{j\ge 1}x_j^i$, for $i\ge 1$, with  $p_0:=1$. The  $i$th \emph{complete} symmetric function $h_i$ and the $i$th \emph{elementary} symmetric function are defined by
$\sum_{i\ge 0}h_it^i=\prod_{j\ge 1}(1-x_jt)^{-1}$ and $\sum_{i\ge 0}e_it^i=\prod_{j\ge 1}(1+x_jt)$, respectively,
and are related to the power sums through
\begin{equation}\label{hewithp}
\sum_{i\ge 0}h_it^i=\exp \sum_{k\ge 1}\frac{p_k}{k} t^k,
\qquad\qquad
\sum_{i\ge 0}e_it^i=\exp \sum_{k\ge 1}\frac{p_k}{k}(-1)^{k-1} t^k.
\end{equation}
In the algebra $\La_{\bbQ}$ of symmetric functions in $x_1,x_2,\ld$ over $\bbQ$, the $p_i$, $i\ge 1$ are algebraically independent,
the $h_i$, $i\ge 1$ are algebraically independent, and the $e_i$, $i\ge 1$ are algebraically independent. This 
algebra is \emph{graded}, by total degree in the underlying indeterminates $x_1,x_2,\ld$.  Moreover, $\La_{\bbQ}$ is
generated by the $p_i$'s, the $h_i$'s or the $e_i$'s, so we have
$$\La_{\bbQ}=\bbQ [p_1,p_2,\ld ]=\bbQ [h_1,h_2,\ld ]=\bbQ [e_1,e_2,\ld ].$$

Now suppose we define quantities $p_{\la}$, $h_{\la}$, $e_{\la}$ indexed by the partition $\la=(\la_1,\ld ,\la_n)$, \emph{multiplicatively},
which means that we write $p_{\la}=p_{\la_1}\cd p_{\la_n}$, $h_{\la}=h_{\la_1}\cd h_{\la_n}$, $e_{\la}=e_{\la_1}\cd e_{\la_n}$, with
the empty product convention $p_{\vep}=h_{\vep}=e_{\vep}=1$. Then of course, regarded as a vector space over $\bbQ$, $\La_{\bbQ}$ has
(multiplicative) bases $\{ p_{\la}:\la\in\cP\}$, $\{ h_{\la}:\la\in\cP\}$ and $\{ e_{\la}:\la\in\cP\}$.

Another (non-multiplicative)
basis is $\{ s_{\la}:\la\in\cP\}$, where $s_{\la}$ is the \emph{Schur} function of \emph{shape} $\la$. There
are many fascinating aspects about
Schur functions, but the one that we shall focus on in this paper is their connection with 
characters of the symmetric group
(we refer to~\cite{s} for the results about characters and representations of the symmetric group used in this paper).
The \textit{conjugacy classes} of the symmetric group $\fS_d$ on $\{ 1,\ld ,d\}$ are
indexed by the partitions of $d$ (and for partition $\la=(\la_1,\ld,\la_n)$, the conjugacy
class $\cC_{\la}$ consists of all permutations in $\fS_d$ whose disjoint cycle lengths
give the parts of $\la$ in some order).  The \textit{irreducible representations} of $\fS_d$ are
also indexed by the partitions of $d$.
For partitions $\la$, $\mu$ of $d$, let $\chi^{\la}_{\mu}$ denote
the \emph{character} of the irreducible representation of $\fS_d$ indexed by $\la$, evaluated
at any element of the conjugacy class $\cC_{\mu}$ (in general, characters are constant on conjugacy classes);
 we usually refer to $\chi^{\la}_{\mu}$ generically as
an irreducible character.
Then, to change bases of $\La_{\bbQ}$ between $\{ p_{\la}:\la\in\cP\}$, and $\{ s_{\la}:\la\in\cP\}$,
we have 
\begin{equation}\label{psandsp}
p_{\la}=\sum_{\mu\vdash d}\chi^{\mu}_{\la}s_{\mu},
\qquad\qquad   s_{\la}=\sum_{\mu\vdash d} \frac{|\cC_{\mu}|}{d!}\chi^{\la}_{\mu}p_{\mu}, \qquad \la\vdash d.
\end{equation}

The endomorphism $\om:\La_{\bbQ}\rightarrow\La_{\bbQ}$ defined by $\om(e_n)=h_n$, $n\ge 1$ is an involution on $\La_{\bbQ}$.
For Schur functions, we have
\begin{equation}\label{sconj}
\om(s_{\la})=s_{\la'}.
\end{equation}

We have various multiplication rules for Schur functions. For example,
\begin{equation}\label{hmult}
h_ns_{\la}=\sum_{\mu}s_{\mu},
\end{equation}
where the sum is over partitions $\mu$ such that $\mu-\la$ is a horizontal $n$-strip. In general, when
the involution $\om$ is applied to an equation for symmetric functions, we call the
resulting equation the \emph{dual}. For example, applying $\om$ to~\eqref{hmult}, we obtain the
dual result
\begin{equation}\label{emult}
e_ns_{\la}=\sum_{\mu}s_{\mu},
\end{equation}
where the sum is over partitions $\mu$ such that $\mu-\la$ is a
vertical $n$-strip. (Equations~\eqref{hmult} and~\eqref{emult} are often referred to as \emph{Pieri} rules.)

Let $\langle\; ,\;\rangle$ be the bilinear form for $\La_{\bbQ}$ defined by
\begin{equation}\label{Schbasis}
\langle s_{\la},s_{\mu}\rangle =\de_{\la ,\mu},\qquad \la,\mu\in\cP .
\end{equation}
For each symmetric function $f\in\La_{\bbQ}$, let $f^{\perp}:\La_{\bbQ}\rightarrow\La_{\bbQ}$ be the
adjoint of multiplication by $f$, so
$$\langle f^{\perp}g_1,g_2\rangle =\langle g_1,fg_2\rangle,\qquad g_1,g_2\in\La_{\bbQ}.$$
Then 
\begin{equation}\label{pnperp}
p_n^{\perp}=n\, \frac{\partial}{\partial p_n}, \qquad n\ge 1.
\end{equation}
Moreover, from~\eqref{hmult} and~\eqref{Schbasis} we have
\begin{equation}\label{hperp}
h_n^{\perp}s_{\la}=\sum_{\mu} s_{\mu},
\end{equation}
where the sum is over partitions $\mu$ such that $\la-\mu$ is a horizontal $n$-strip.
Similarly, from~\eqref{emult} and~\eqref{Schbasis} we have
\begin{equation}\label{eperp}
e_n^{\perp}s_{\la}=\sum_{\mu} s_{\mu},
\end{equation}
where the sum is over partitions $\mu$ such that $\la-\mu$ is a vertical $n$-strip.

We now consider Bernstein's symmetric function operators $B(t)$, $B_n$ defined in~\eqref{Bernp}.
Equivalently, from~\eqref{hewithp} and~\eqref{pnperp}, together with
the fact that the partial differential operators $\frac{\del}{\del p_k}$ commute, we have
\begin{equation}\label{Tadef}
B(t):=\sum_{n\in\bbZ}B_nt^n :=\sum_{k,m\ge 0}(-1)^m t^{k-m}h_k\, e_m^{\perp},
\end{equation}
(where $B_n$ is an operator on $\La_{\bbQ}$ for each $n\in\bbZ$).
Then we also have
\begin{equation}\label{Taperpdef}
B^{\perp}(t)=\sum_{n\in\bbZ}B^{\perp}_nt^n =\sum_{k,m\ge 0}(-1)^m t^{k-m}e_m\, h_k^{\perp},
\end{equation}
and we immediately obtain $B^{\perp}_n=(-1)^n\om B_{-n} \om$ (note that this corrects
the relation given in~\cite[p. 96]{mac}). In terms of the series $B$ itself, this becomes
the operator equation
\begin{equation}\label{BBperp}
B^{\perp}(t)=\om B(-t^{-1}) \om,
\end{equation}
where $\om$ applied to a series in $t$ acts linearly, on the coefficients of each monomial in $t$.

\subsection{A combinatorial treatment of Bernstein's operator}

We define $\cR_{\la ,\mu}$ to be the set of partitions $\nu$ such that $\la -\nu$ is
a vertical strip, and $\mu -\nu$ is a horizontal strip. Let
\begin{equation}\label{Bdef}
R_{\la ,\mu}=\sum_{\nu\in\cR_{\la ,\mu}}(-1)^{|\la |-|\nu |},
\end{equation}
which is $0$ when the set $\cR_{\la ,\mu}$ is empty.
This sum arises naturally in the action of $B(t)$ on a Schur function $s_{\la}$, as
given in the following result.

\begin{pro}\label{Tasla}
For any partition $\la$, we have
$$B(t)\, s_{\la} = \sum_{\mu\in\cP}R_{\la ,\mu}\, t^{|\mu |-|\la |} s_{\mu}.$$

\end{pro}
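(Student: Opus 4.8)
The plan is to compute $B(t)\,s_\la$ directly from the expression~\eqref{Tadef}, namely $B(t)=\sum_{k,m\ge 0}(-1)^m t^{k-m}h_k\,e_m^\perp$, by applying the two operators in sequence. First I would apply $e_m^\perp$ to $s_\la$ using~\eqref{eperp}, which gives $e_m^\perp s_\la=\sum_\nu s_\nu$ where the sum is over partitions $\nu$ such that $\la-\nu$ is a vertical $m$-strip; in particular $|\la|-|\nu|=m$, so $\nu$ is obtained from $\la$ by deleting a vertical strip of size $m$. Then I would apply $h_k$ to each resulting $s_\nu$ using the Pieri rule~\eqref{hmult}, giving $h_k s_\nu=\sum_\mu s_\mu$ where $\mu-\nu$ is a horizontal $k$-strip, so $|\mu|-|\nu|=k$.

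Combining these, the coefficient of $s_\mu$ in $B(t)\,s_\la$ is obtained by summing over all intermediate partitions $\nu$ for which $\la-\nu$ is a vertical strip and $\mu-\nu$ is a horizontal strip — which is exactly the set $\cR_{\la,\mu}$ from the definition~\eqref{Bdef}. For each such $\nu$, the contribution carries the sign $(-1)^m=(-1)^{|\la|-|\nu|}$ coming from $e_m^\perp$, and the power of $t$ is $t^{k-m}=t^{(|\mu|-|\nu|)-(|\la|-|\nu|)}=t^{|\mu|-|\la|}$. Crucially the power of $t$ depends only on $\la$ and $\mu$, not on the intermediate $\nu$, so I can factor it out of the inner sum over $\nu$. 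The inner sum over $\nu\in\cR_{\la,\mu}$ of the signs $(-1)^{|\la|-|\nu|}$ is precisely $R_{\la,\mu}$ as defined in~\eqref{Bdef}. Thus $B(t)\,s_\la=\sum_{\mu\in\cP}R_{\la,\mu}\,t^{|\mu|-|\la|}s_\mu$, as claimed.

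I expect the main subtlety to be bookkeeping rather than any deep obstacle: one must verify that summing $h_k\,e_m^\perp$ over all $k,m\ge 0$ is legitimately reorganized as a sum over pairs $(\nu,\mu)$, and in particular that for fixed $\mu$ the value $|\mu|-|\la|$ that appears as the exponent of $t$ is forced, so that the separate indices $k$ and $m$ collapse correctly into the single exponent $|\mu|-|\la|$. The only genuine point to check is that every $\nu\in\cR_{\la,\mu}$ arises from a \emph{unique} pair $(k,m)$, with $m=|\la|-|\nu|$ and $k=|\mu|-|\nu|$ determined by $\nu$; this is immediate since the strip sizes are determined by the cardinalities. With that observed, the convergence/reindexing is harmless because applying $B(t)$ to the fixed-weight element $s_\la$ produces a Laurent series bounded below in $t$, as noted in the introduction, so only finitely many terms contribute to each power of $t$.
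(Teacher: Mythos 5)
Your proof is correct and follows exactly the route the paper intends: the paper's own proof simply cites~\eqref{Tadef}, \eqref{hmult} and~\eqref{eperp} and calls the result immediate, and your argument is the careful expansion of that one-line proof, including the key observation that the exponent $t^{k-m}=t^{|\mu|-|\la|}$ is independent of the intermediate partition $\nu$, so the signs collapse into $R_{\la,\mu}$. Nothing is missing; you have just written out the bookkeeping the authors left implicit.
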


\begin{proof}
The result follows immediately from~(\ref{Tadef}),~\eqref{hmult} and~\eqref{eperp}.
\end{proof}

\begin{figure}[ht]
\begin{center}
\scalebox{.70}{\includegraphics{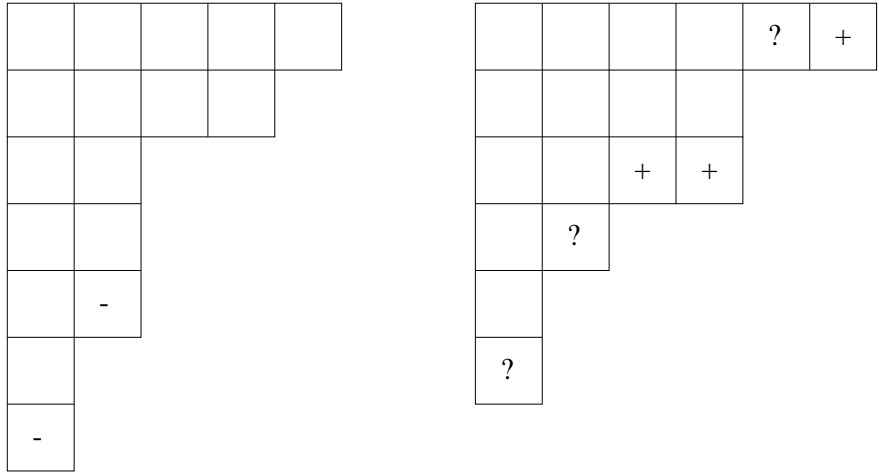}}
\end{center}
\caption{Partitions $\la$ and $\mu$ with $\la$-ambiguous squares.}\label{ambig}
\end{figure}

Now we consider the structure of the set $\cR_{\la ,\mu}$, beginning with
the example given in Figure~\ref{ambig}, where the
diagrams of $\la =(5,4,2,2,2,1,1)$ and $\mu =(6,4,4,2,1,1)$ are given on the left
and right of the diagram, respectively. There are three classes of squares
that we shall consider when $\cR_{\la ,\mu}$ is nonempty:
\begin{itemize}
\item
the squares of $\mu$ that are not contained in $\la$, necessarily bottommost in their column
of $\mu$. None of these is contained in any $\nu$ in $\cR_{\la ,\mu}$ (in Figure~\ref{ambig}, these squares
contain ``+''). Such squares are contained in the horizontal strip that is added in the multiplication by an $h_k$;
\item
the squares of $\la$ that are not contained in $\mu$, necessarily rightmost in their row
of $\la$. All of these are contained in every $\nu$ in $\cR_{\la ,\mu}$ (in Figure~\ref{ambig},
these squares contain ``-'')
Such squares are contained in the vertical strip that is deleted in the application of an $e_m^{\perp}$;
\item
the squares that are contained in both $\la$ and $\mu$, that are rightmost in their row
of $\la$, and bottommost in their column of $\mu$. Each of these is contained in some of
the $\nu$ in $\cR_{\la ,\mu}$ (actually in half of the $\nu$), but not others. We call such
a square a $\la$-\emph{ambiguous} square of $\mu$ (in Figure~\ref{ambig},
the $\la$-ambiguous squares of $\mu$ contain ``?''). Such squares may have been contained in both a deleted
vertical strip and an added horizontal strip, or in neither.
\end{itemize}

\begin{lem}\label{ambvan}
If $\mu$ has any $\la$-ambiguous squares, then
$$R_{\la ,\mu} =0.$$
\end{lem}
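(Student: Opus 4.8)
The plan is to prove $R_{\la,\mu}=0$ by exhibiting a sign-reversing, fixed-point-free involution on the index set $\cR_{\la,\mu}$, so that the terms of the signed sum in~\eqref{Bdef} cancel in pairs. First I would fix one $\la$-ambiguous square $s$ of $\mu$, lying in row $r$ and column $c$. By the definition of $\la$-ambiguous, $s$ lies in both $\la$ and $\mu$, is rightmost in its row of $\la$ (so $\la_r=c$), and is bottommost in its column of $\mu$ (so $\mu'_c=r$). Each $\nu\in\cR_{\la,\mu}$ satisfies $\nu\subseteq\la$ and $\nu\subseteq\mu$, and the involution I would use simply toggles the single square $s$: it deletes $s$ from $\nu$ if $s\in\nu$, and adjoins $s$ to $\nu$ if $s\notin\nu$.

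Before checking this stays inside $\cR_{\la,\mu}$, I would verify the toggle is legal using the two strip conditions. If $s\in\nu$, then $\nu\subseteq\la$ with $\la_r=c$ forces $\nu_r=c$, and $\nu\subseteq\mu$ with $\mu'_c=r$ forces $\nu'_c=r$; hence $s$ is a removable corner of $\nu$. If instead $s\notin\nu$, then since $\la-\nu$ is a vertical strip its row $r$ has at most one box, which pins $\nu_r=\la_r-1=c-1$; and since $\mu-\nu$ is a horizontal strip its column $c$ has at most one box, forcing every box of column $c$ of $\mu$ above row $r$ into $\nu$, so $\nu_{r-1}\ge c$ when $r\ge 2$. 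Thus $s$ is an addable corner of $\nu$.

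The heart of the argument, and the step I expect to demand the most care, is checking that toggling $s$ preserves membership in $\cR_{\la,\mu}$, i.e.\ that both strip conditions survive. When $s\in\nu$, the equalities $\nu_r=\la_r$ and $\nu'_c=\mu'_c$ mean that $\la-\nu$ has no box in row $r$ and $\mu-\nu$ has no box in column $c$; so deleting $s$ from $\nu$ inserts exactly one box into row $r$ of $\la-\nu$ and exactly one box into column $c$ of $\mu-\nu$, leaving $\la-\nu$ a vertical strip and $\mu-\nu$ a horizontal strip. When $s\notin\nu$, the box $s$ is precisely the unique box of $\la-\nu$ in its row and the unique box of $\mu-\nu$ in its column; adjoining $s$ to $\nu$ deletes it from both skew shapes, and deleting a box from a vertical (resp.\ horizontal) strip leaves a vertical (resp.\ horizontal) strip. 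In either direction the containments $\nu\subseteq\la$ and $\nu\subseteq\mu$ are clearly maintained, so the toggle maps $\cR_{\la,\mu}$ into itself.

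Finally I would observe that the toggle is an involution, since applying it twice restores $\nu$, and is fixed-point-free, since it always changes whether $s$ is present; moreover it changes $|\nu|$ by exactly one, so it flips the sign $(-1)^{|\la|-|\nu|}$. Pairing each $\nu$ with its image therefore cancels the corresponding pair of terms in $R_{\la,\mu}$, yielding $R_{\la,\mu}=0$, as required.
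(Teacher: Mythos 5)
Your proof is correct and takes essentially the same approach as the paper: a sign-reversing, fixed-point-free involution on $\cR_{\la,\mu}$ that toggles a single fixed $\la$-ambiguous square, flipping the sign $(-1)^{|\la|-|\nu|}$ so the terms of $R_{\la,\mu}$ cancel in pairs. The only (harmless) difference is that the paper toggles the \emph{rightmost} ambiguous square, while your verification shows that any fixed ambiguous square works, and your check of the two strip conditions is somewhat more detailed than the paper's.
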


\begin{proof}
If $\mu$ has any $\la$-ambiguous squares, let $c$ be the rightmost of these (there is
at most one $\la$-ambiguous square in any column of $\mu$, since it can only occur
as the bottommost element of a column). Define the mapping
$$\phi :\cR_{\la ,\mu}\rightarrow\cR_{\la ,\mu}: \nu\mapsto\nu^{\prime}$$
as follows: if $\nu$ contains $c$, then $\nu^{\prime}$ is obtained by removing $c$ from $\nu$;
if $\nu$ does not contain $c$, then $\nu^{\prime}$ is obtained by adding $c$ to $\nu$. This is
well-defined, for the following reasons: since $c$ is rightmost in its row of $\la$ and bottommost
in its column of $\mu$, every square of $\la$ in the same column as $c$ and below $c$ must
belong to the vertical strip $\la -\nu$ (and no other squares in this column can belong
to this vertical strip), so $\la-\nu^{\prime}$ is a vertical strip whether $\nu$ contains $c$ or
not; also, every square of $\mu$ in the same row as $c$ and to the right of $c$ must
belong to the horizontal strip $\mu -\nu$ (and no other squares in this row can belong
to this horizontal strip), so $\mu -\nu^{\prime}$ is a horizontal strip
whether $\nu$ contains $c$ or not).

Clearly $\phi$ is an involution on $\cR_{\la ,\mu}$, so it is a bijection, and thus we have
\begin{equation}\label{Bzero}
R_{\la ,\mu}=\sum_{\nu\in\cR_{\la ,\mu}}(-1)^{|\la |-|\nu |}
=-\sum_{\nu^{\prime}\in\cR_{\la ,\mu}}(-1)^{|\la |-|\nu^{\prime}|}
=-R_{\la ,\mu},
\end{equation}
where, for the second equality, we have changed summation variables to $\phi(\nu )=\nu^{\prime}$.
The result follows 
immediately. (In the context of~(\ref{Bzero}), $\phi$ is often referred
to as a \emph{sign-reversing involution}.)
\end{proof}

If $\cR_{\la ,\mu}$ is nonempty and $\mu$ has no $\la$-ambiguous squares, we
call $\mu$ a $\la$-\emph{survivor}. Note that in this case there is a
unique $\nu$ in $\cR_{\la ,\mu}$; the terminology is chosen since $\mu$ ``survives'' the involution
in Lemma~\ref{ambvan}.

\begin{pro}\label{survchar}
In a $\la$-survivor $\mu$:
\begin{itemize}
\item[(a)] if the rightmost square of row $i$ of $\la$ is not contained in $\mu$, then the
rightmost square of row $i-1$ of $\la$ is not contained in $\mu$; 
\item[(b)] if the bottommost square of column $i$ of $\mu$ is not contained in $\la$, then the
bottommost square of column $i-1$ of $\mu$ is not contained in $\la$.
\end{itemize}
\end{pro}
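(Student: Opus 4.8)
The plan is to prove both parts by contradiction, in each case manufacturing an explicit $\la$-ambiguous square of $\mu$ --- something a $\la$-survivor, by definition, cannot possess. As orientation, the survivor hypothesis is quite rigid: one checks that the unique $\nu\in\cR_{\la,\mu}$ must be $\nu=\la\cap\mu$, since any square of $\la\cap\mu$ lying outside $\nu$ would be forced (by the vertical-strip condition on $\la-\nu$, the horizontal-strip condition on $\mu-\nu$, and the fact that $\nu$ is itself a diagram) to be rightmost in its row of $\la$ and bottommost in its column of $\mu$, hence $\la$-ambiguous. I will not need this uniqueness explicitly, but it signals that the two ``extremal'' conditions defining ambiguity are exactly what drive the argument.

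For part (a), I first translate into inequalities on parts: the rightmost square of row $i$ of $\la$ is $(i,\la_i)$, and it lies outside $\mu$ exactly when $\la_i>\mu_i$; the claim is then $\la_{i-1}>\mu_{i-1}$. So assume $\la_i>\mu_i$ and suppose, for contradiction, that $\la_{i-1}\le\mu_{i-1}$. I examine the rightmost square $c=(i-1,\la_{i-1})$ of row $i-1$ of $\la$. Since $\la_{i-1}\le\mu_{i-1}$ we have $c\in\mu$, so $c\in\la\cap\mu$, and by construction $c$ is rightmost in its row of $\la$. The square directly below $c$ is $(i,\la_{i-1})$; as $\mu_i<\la_i\le\la_{i-1}$, this square is not in $\mu$, so $c$ is bottommost in its column of $\mu$. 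Hence $c$ is a $\la$-ambiguous square of $\mu$, the desired contradiction, and therefore $\la_{i-1}>\mu_{i-1}$.

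Part (b) is the conjugate of (a), and I would obtain it by reflecting the whole configuration in the main diagonal. This reflection sends $(\la,\mu)$ to $(\la',\mu')$, interchanges horizontal and vertical strips so that it carries $\cR_{\la,\mu}$ bijectively onto $\cR_{\mu',\la'}$, and sends each $\la$-ambiguous square $(r,s)$ of $\mu$ to the $\mu'$-ambiguous square $(s,r)$ of $\la'$; in particular $\la'$ is a $\mu'$-survivor whenever $\mu$ is a $\la$-survivor. Since columns of $\mu$ become rows of $\mu'$ and ``not in $\la$'' becomes ``not in $\la'$'', statement (b) for $(\la,\mu)$ is precisely statement (a) applied to the survivor pair $(\mu',\la')$. (Equivalently, one repeats the argument of (a) with rows and columns interchanged, inspecting the bottommost square $(\mu'_{i-1},i-1)$ of column $i-1$ of $\mu$ and using $\la'_i<\mu'_i\le\mu'_{i-1}$ to place its right neighbour outside $\la$.) The only place demanding care is this translation step --- reading ``rightmost/bottommost square not contained in $\cdots$'' correctly as $\la_i>\mu_i$ and $\mu'_i>\la'_i$, and confirming that the relevant neighbouring square falls outside the relevant diagram; once that bookkeeping is in place the contradictions are one-liners, and the conjugation symmetry removes any need to treat (b) separately.
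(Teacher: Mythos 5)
Your proof is correct and follows essentially the same route as the paper's: part (a) is the identical contradiction argument (you supply the inequality $\mu_i<\la_i\le\la_{i-1}$ that the paper leaves implicit when asserting the square is bottommost in its column of $\mu$), and part (b) rests on the same exhibition of a $\la$-ambiguous square, which you reach by conjugating the diagram to reduce to (a) whereas the paper argues it directly in one line. There are no gaps; if anything, your version records the small verifications (rightmost-in-row of $\la$, bottommost-in-column of $\mu$) that the paper's terse wording glosses over.
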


\begin{proof}
For part~(a), if the rightmost square of row $i$ of $\la$ is not contained in $\mu$ and
the rightmost square of row $i-1$ of $\la$ is contained in $\mu$, then the latter must
be bottommost in its column of $\mu$. But that makes it a $\la$-ambiguous square, impossible
in a $\la$-survivor. For~(b),
if the bottommost square of column $i$ of $\mu$ is not contained in $\la$ and the
bottommost square of column $i-1$ of $\mu$ is contained in $\la$, then the latter is
by definition a $\la$-ambiguous square, impossible in a $\la$-survivor. 
\end{proof}

\subsection{The main result}

Now we are able to determine explicitly the action of $B(t)$ on a single Schur function $s_{\la}$.
The following is our main result.

\begin{thm}\label{Bsla}
For any partition $\la$, we have
\begin{equation*}
B(t)\, s_{\la}=\sum_{i\ge 1}(-1)^{|\la |-|\la^{(i)}|+i-1} t^{|\la^{(i)}|-|\la |}s_{\la^{(i)}}.
\end{equation*}
\end{thm}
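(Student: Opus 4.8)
The plan is to begin from Proposition~\ref{Tasla}, which already gives $B(t)\,s_\la=\sum_{\mu\in\cP}R_{\la,\mu}\,t^{|\mu|-|\la|}s_\mu$, and to pin down exactly which $\mu$ contribute and with what coefficient. By Lemma~\ref{ambvan} any $\mu$ with a $\la$-ambiguous square has $R_{\la,\mu}=0$, so the sum collapses onto the $\la$-survivors. For a survivor $\mu$ the set $\cR_{\la,\mu}$ has a single element $\nu$, and I would first record the precise shape of this situation: the existence of any $\nu\in\cR_{\la,\mu}$ already forces every square of $\la$ not in $\mu$ to be rightmost in its row (so at most one per row) and every square of $\mu$ not in $\la$ to be bottommost in its column (so at most one per column); hence $\nu=\la\cap\mu$, the vertical strip $\la-\nu$ consists exactly of the squares of $\la$ absent from $\mu$, and the horizontal strip $\mu-\nu$ consists exactly of the squares of $\mu$ absent from $\la$. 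Consequently $R_{\la,\mu}=(-1)^{|\la|-|\nu|}$, where $|\la|-|\nu|$ is the number of squares of $\la$ missing from $\mu$.

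The heart of the proof is a bijection between the positive integers $i$ and the $\la$-survivors $\mu$, given by $\mu=\la^{(i)}$. In the forward direction I would take the explicit description~\eqref{formisw} and verify that $\la^{(i)}$ is a survivor: the squares of $\la$ deleted in passing to $\la^{(i)}$ are precisely the rightmost squares of rows $1,\ldots,u_i(\la)$, one per row and hence a legitimate vertical strip; the squares added occupy strictly decreasing, nested column-ranges in successive rows, hence at most one per column and a legitimate horizontal strip; and no square is at once rightmost in its $\la$-row and bottommost in its $\la^{(i)}$-column, so there are no ambiguous squares. For the converse I would run Proposition~\ref{survchar} backwards: part~(a) forces the deleted rightmost squares to fill an initial block of rows and part~(b) forces the added bottommost squares to fill an initial block of columns, so that a survivor $\mu$ is rigidly determined by the single number $i:=1+(|\mu|-|\la\cap\mu|)$ (the number of added squares being $i-1$), and the resulting shape is recognized as the one obtained by switching the $i$th R to a U in the code of $\la$, that is, as $\la^{(i)}$. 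Injectivity of $i\mapsto\la^{(i)}$ is immediate from the strict chain $|\la^{(1)}|<|\la^{(2)}|<\cdots$ of~\eqref{laiinc}.

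It then remains only to match coefficients. The number of squares of $\la$ missing from $\la^{(i)}$ is $u_i(\la)$, so by~\eqref{ij} the sign is $(-1)^{|\la|-|\nu|}=(-1)^{u_i(\la)}=(-1)^{|\la|-|\la^{(i)}|+i-1}$, while the power of $t$ is $|\la^{(i)}|-|\la|$; summing over $i\ge1$ yields the asserted formula.

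The step I expect to be the main obstacle is the converse half of the bijection: converting the purely diagram-theoretic survivor structure supplied by Proposition~\ref{survchar} into the statement that $\mu$ arises from flipping a single R to a U, and checking that the initial block of shortened rows and the initial block of lengthened columns are forced to interlock in exactly the one pattern permitted by~\eqref{formisw}. The reduction to survivors and the sign bookkeeping, by contrast, follow directly from the identities already assembled in Section~\ref{sec2} and from Lemma~\ref{ambvan}.
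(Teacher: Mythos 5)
Your proposal is correct and follows essentially the same route as the paper's own proof: reduce to $\la$-survivors via Proposition~\ref{Tasla} and Lemma~\ref{ambvan}, identify the survivors as exactly the partitions $\la^{(i)}$, $i\ge 1$, using Proposition~\ref{survchar} together with~\eqref{formisw}, and then convert the sign $(-1)^{|\la|-|\nu|}=(-1)^{u_i(\la)}$ via~\eqref{ij}. The bijective framing (forward verification that each $\la^{(i)}$ is a survivor, converse rigidity argument) is exactly the content of the paper's characterization step, so there is nothing substantively different to flag.
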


\begin{proof}
{}From Proposition~\ref{Tasla} and Lemma~\ref{ambvan}, we have
\begin{equation}\label{intform}
B(t)\, s_{\la} = \sum_{\mu} R_{\la ,\mu}\, t^{|\mu |-|\la |} s_{\mu},
\end{equation}
where the summation is over all $\la$-survivors $\mu$.
Now we characterize the $\la$-survivors.
Suppose $\la=(\la_1,\ld ,\la_n)$, where $\la_1\ge \ld\ge\la_n\ge 1$, and
we let $\la_0=\infty$, $\la_{n+1}=0$. Then in a $\la$-survivor $\mu$, from
Proposition~\ref{survchar}(a), the rightmost cells in
rows $1,\ld ,j$ of $\la$ are not contained in $\mu$, and the rightmost cells
of rows $j+1,\ld ,n$ are contained in $\mu$, for some $j=0,\ld ,n$ with $\la_j>\la_{j+1}$.
But, in order to avoid the
bottommost cell of column $\la_{j+1}$ in $\mu$ being $\la$-ambiguous, then the
bottommost cell of column $\la_{j+1}$ in $\mu$ cannot be contained in $\la$. Thus
we conclude from Proposition~\ref{survchar}(b) that the bottommost cells
in columns $1,\ld ,\la_{j+1}$ of $\mu$ are not contained in $\la$. Also, the bottommost
squares in columns $\la_{j+1},\ld ,i-1$ of $\mu$ are not contained in $\la$ for
some $\la_{j+1}<i\le\la_j$. Finally, for each $i\ge 1$, there exists a choice
of $j=0,\ld ,n$ for which $\la_{j+1}<i\le\la_j$, so the $\la$-survivor $\mu$ described
above exists for each $i\ge 1$. This partition $\mu$ is obtained from $\la$ by deleting
the column strip consisting of the rightmost squares in rows $1,\ld , j$, and
adding the horizontal strip consisting of the bottommost cells in columns $1,\ld ,i-1$.
This gives
$$\mu=(\la_1-1,\ld ,\la_j-1,i-1,\la_{j+1},\ld ,\la_n)=\la^{(i)},$$
where the second equality is from~(\ref{formisw}), and where $j=u_i(\la )$. But we
have $R_{\la ,\la^{(i)}}=(-1)^{j}$, and
the result follows from~\eqref{ij} and~\eqref{intform}.
\end{proof}

Note that the right hand side of the above result is a Laurent series in $t$ for each $\la$,
with minimum power of $t$ given by $t^{|\la^{(1)}|-|\la|}=t^{-l(\la )}$.

The following pair of dual corollaries to our main result will be particularly convenient for
dealing with the KP hierarchy.

\begin{cor}\label{Bsumsla}
For scalars $a_{\la}$, $\la\in\cP$, we have
\begin{equation*}
B(t)\, \sum_{\la\in\cP}a_{\la}s_{\la}
=\sum_{\be\in\cP}s_{\be} \sum_{k\ge 1}(-1)^{k-1} t^{|\be |-|\be^{(-k)}|}a_{\be^{(-k)}}.
\end{equation*}
\end{cor}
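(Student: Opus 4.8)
The plan is to apply Theorem~\ref{Bsla} term-by-term to the expansion $\sum_{\la\in\cP}a_\la s_\la$ and then reindex the resulting double sum by the \emph{output} partition $\be=\la^{(i)}$ that labels the Schur function. By linearity of $B(t)$ and Theorem~\ref{Bsla},
$$
B(t)\sum_{\la\in\cP}a_\la s_\la
=\sum_{\la\in\cP}a_\la\sum_{i\ge 1}(-1)^{|\la|-|\la^{(i)}|+i-1}\,t^{|\la^{(i)}|-|\la|}\,s_{\la^{(i)}}.
$$
To match the stated form, I would collect the coefficient of each $s_\be$, which requires understanding exactly which pairs $(\la,i)$ produce a given $\be$ via $\be=\la^{(i)}$.

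The key step is a bijection read off from the code. The operation $\la\mapsto\la^{(i)}$ switches the $i$th R (from the left) to a U, while $\mu\mapsto\mu^{(-k)}$ switches the $k$th U (from the right) to an R; each flips a single symbol of the code, and the two are mutually inverse when applied at the same position. Concretely, I would show that $\be=\la^{(i)}$ holds if and only if $\la=\be^{(-k)}$ with $k=u_i(\la)+1$: switching the $i$th R of $\la$ to a U creates, in the code of $\be$, a new U having exactly $u_i(\la)$ U's to its right (these being unchanged), so this new U is the $(u_i(\la)+1)$th U from the right of $\be$, and flipping it back recovers $\la$. Conversely, given $\be$ and $k\ge 1$, forming $\la=\be^{(-k)}$ produces a new R with $k-1$ up-steps to its right, so if it is the $i$th R from the left then $u_i(\la)=k-1$ and $\la^{(i)}=\be$. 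This establishes a bijection between $\{(\la,i):\la\in\cP,\,i\ge 1\}$ and $\{(\be,k):\be\in\cP,\,k\ge 1\}$ with $\be=\la^{(i)}$ and $\la=\be^{(-k)}$.

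It then remains to translate the sign, the power of $t$, and the coefficient through this bijection. Using $k=u_i(\la)+1$ together with \eqref{ij}, the exponent satisfies $|\la|-|\la^{(i)}|+i-1=u_i(\la)=k-1$, so the sign $(-1)^{|\la|-|\la^{(i)}|+i-1}$ becomes $(-1)^{k-1}$; the power of $t$ reads $t^{|\la^{(i)}|-|\la|}=t^{|\be|-|\be^{(-k)}|}$; and the coefficient $a_\la$ becomes $a_{\be^{(-k)}}$. Substituting these into the double sum and summing over $\be\in\cP$ and $k\ge 1$ yields the claimed identity.

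The hardest part will be pinning down the bijection and, in particular, verifying the index relation $k=u_i(\la)+1$ cleanly from the definition of $u_i(\la)$ as the number of up-steps following the $i$th right-step from the left; once this is in hand, the sign, power-of-$t$, and coefficient comparisons are immediate from \eqref{ij}.
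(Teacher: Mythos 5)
Your proposal is correct and follows essentially the same route as the paper: apply Theorem~\ref{Bsla} linearly, use~\eqref{ij} to rewrite the sign as $(-1)^{u_i(\la)}$, and change summation variables via the code bijection $\be=\la^{(i)} \Leftrightarrow \la=\be^{(-k)}$ with $k=u_i(\la)+1$. The only difference is that you spell out the code-flipping argument for the bijection, which the paper treats as immediate.
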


\begin{proof}
{}From Theorem~\ref{Bsla} and~\eqref{ij}, we immediately obtain
\begin{equation}\label{simpsum}
B(t)\, \sum_{\la\in\cP}a_{\la}s_{\la}=
\sum_{\la\in\cP}a_{\la}\sum_{i\ge 1}(-1)^{u_i(\la )} t^{|\la^{(i)}|-|\la |}s_{\la^{(i)}}.
\end{equation}
Now from the code description, it is immediate that $\be =\la^{(i)}$ is equivalent
to $\la =\be^{(-k)}$, where $k=u_i(\la )+1$. The result follows immediately, by 
changing summation variables in~\eqref{simpsum} from $\la\in\cP$, $i\ge 1$ to $\be\in\cP$, $k\ge 1$.
\end{proof}

\begin{cor}\label{Bperpsumsla}
For scalars $a_{\la}$, $\la\in\cP$, we have
\begin{equation*}
B^{\perp}(t^{-1})\, \sum_{\la\in\cP}a_{\la}s_{\la}
=\sum_{\al\in\cP}s_{\al} \sum_{m\ge 1}(-1)^{|\al |-|\al^{(m)}|+m-1} t^{|\al |-|\al^{(m)}|}a_{\al^{(m)}}.
\end{equation*}
\end{cor}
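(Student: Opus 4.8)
The plan is to exploit the duality relation \eqref{BBperp} together with the conjugation property \eqref{sconj} of Schur functions, and then reduce everything to the already-proved Corollary~\ref{Bsumsla}. First I would start from the operator identity $B^{\perp}(t)=\om B(-t^{-1})\om$, replace $t$ by $t^{-1}$ to obtain $B^{\perp}(t^{-1})=\om B(-t)\om$, and apply both sides to $\sum_{\la\in\cP}a_{\la}s_{\la}$. Using \eqref{sconj} and linearity of $\om$, the inner $\om$ sends this series to $\sum_{\la}a_{\la}s_{\la'}$, which I would reindex as $\sum_{\be}a_{\be'}s_{\be}$ by writing $\be=\la'$ (recall conjugation is an involution, so this is a bijection on $\cP$). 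This converts the problem into computing $B(-t)\sum_{\be}a_{\be'}s_{\be}$ and then applying the outer $\om$.

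Next I would invoke Corollary~\ref{Bsumsla} directly, but with the argument $-t$ in place of $t$ and with coefficients $a_{\be'}$ in place of $a_{\be}$. This yields
$$B(-t)\sum_{\be\in\cP}a_{\be'}s_{\be}
=\sum_{\ga\in\cP}s_{\ga}\sum_{k\ge 1}(-1)^{k-1}(-t)^{|\ga|-|\ga^{(-k)}|}a_{(\ga^{(-k)})'}.$$
Applying the outer $\om$ sends each $s_{\ga}$ to $s_{\ga'}$, so after reindexing $\al=\ga'$ the left side becomes the desired $B^{\perp}(t^{-1})\sum_{\la}a_{\la}s_{\la}$ and each surviving Schur function is $s_{\al}$.

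The main obstacle, and the step I would treat most carefully, is the combinatorial bookkeeping that matches the conjugate-code operations to the stated formula. The key fact I expect to need is that conjugation interchanges the two code operations, namely $(\la^{(i)})'=(\la')^{(-i)}$ and equivalently $(\ga^{(-k)})'=(\ga')^{(k)}$; this is immediate from the code description, since conjugating a partition reverses its two-way binary string and swaps the roles of U and R, so switching the $i$th R from the left corresponds exactly to switching the $i$th U from the right in the conjugate. Setting $\al=\ga'$, this identity turns $a_{(\ga^{(-k)})'}$ into $a_{\al^{(k)}}$, and turns the exponent $|\ga|-|\ga^{(-k)}|=|\al|-|\al^{(k)}|$ since conjugation preserves size. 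Finally I would absorb the sign: writing $m=k$ and combining $(-1)^{k-1}$ with the sign coming from $(-t)^{|\al|-|\al^{(m)}|}$, and using the identity $|\al|-|\al^{(m)}|+m-1=u_m(\al)$ from \eqref{ij}, the total sign collapses to $(-1)^{|\al|-|\al^{(m)}|+m-1}$ and the power of $t$ becomes $t^{|\al|-|\al^{(m)}|}$, exactly as claimed. Verifying that the two sign contributions combine correctly is the one place where an off-by-one parity error could creep in, so that is where I would check the arithmetic explicitly.
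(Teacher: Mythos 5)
Your proposal is correct and follows essentially the same route as the paper: both pass through the duality \eqref{BBperp}, the conjugation rule \eqref{sconj}, and the code identity $(\la')^{(i)}=(\la^{(-i)})'$, the only cosmetic difference being that you invoke Corollary~\ref{Bsumsla} (with $t\mapsto -t$ and coefficients $a_{\be'}$) where the paper applies Theorem~\ref{Bsla} to each $s_{\la'}$ and then repeats the same change of summation variables inline. Your sign bookkeeping, including the collapse of $(-1)^{k-1}(-t)^{|\al|-|\al^{(k)}|}$ to $(-1)^{|\al|-|\al^{(m)}|+m-1}\,t^{|\al|-|\al^{(m)}|}$, is accurate.
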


\begin{proof}
{}From Theorem~\ref{Bsla},~\eqref{BBperp} and~\eqref{sconj}, we obtain
\begin{equation*}
B^{\perp}(t^{-1})\, \sum_{\la\in\cP}a_{\la}s_{\la}= \sum_{\la\in\cP}a_{\la} \om B(-t)\, s_{\la'}
=\sum_{\la\in\cP}a_{\la}\sum_{i\ge 1}(-1)^{i-1} t^{|(\la')^{(i)}|-|\la'|}s_{((\la')^{(i)})'}.
\end{equation*}
But from the code description, it is immediate that $(\la')^{(i)}=(\la^{(-i)})'$, and
since $|\mu'|=|\mu|$ for any partition $\mu$, we can simplify the double summation above to obtain
\begin{equation}\label{simpsumperp}
B^{\perp}(t^{-1})\, \sum_{\la\in\cP}a_{\la}s_{\la}=
\sum_{\la\in\cP}a_{\la}\sum_{i\ge 1}(-1)^{i-1} t^{|\la^{(-i)}|-|\la|}s_{\la^{(-i)}}.
\end{equation}
As in the proof of Corollary~\ref{Bsumsla}, we have that $\al =\la^{(-i)}$ is equivalent
to $\la =\al^{(m)}$, where $i=u_m(\al )+1$. The result now follows immediately, by
changing summation variables in~\eqref{simpsum} from $\la\in\cP$, $i\ge 1$ to $\al\in\cP$, $m\ge 1$,
and applying~\eqref{ij} to evaluate~$u_m(\al )$ (which is the exponent of $(-1)$  when the summation
is expressed in terms of $\al$, $m$).
\end{proof}

Among the results in~\cite{mac} and~\cite{z} for Bernstein's operators is
$$B_{\la_1}\, \cdots B_{\la_n}\, 1 = s_{\la},$$
where $\la =(\la_1,\ldots ,\la_n)$. This result follows immediately from Theorem~\ref{Bsla},
together with~\eqref{laiinc}. To compose $B_i$ when they are not ordered as in this result,
one simply uses the result that $B_iB_j=-B_{j-1}B_{i+1}$, which follows routinely from
Theorem~\ref{Bsla} and considering what happens when two right-steps are switched to
up-steps in the two possible orders.

\section{Codes of partitions and the Pl\"ucker relations}\label{sec3}

We consider a set $a_{\cP}=\{a_{\la}:\la\in\cP\}$ of scalars indexed by the set $\cP$ of partitions.
Then the \textit{Pl\"ucker relations} for $a_{\cP}$ are given by the following system
of simultaneous quadratic equations: for
all $m\ge 1$ and  $\al=(\al_1,\ld ,\al_{m-1}),\be=(\be_1,\ld ,\be_{m+1})\in\cP$ with $l(\al)\le m-1$,
 $l(\be)\le m+1$ (which means that $\al_i=0$ for $i> l(\al )$, and $\be_i=0$ for $i> l(\be )$),
we have
\begin{equation}\label{plucker}
\sum_{k=0}^m(-1)^{k-m+1+\ell}
a_{(\al_1-1,\ld,\al_{\ell}-1,\be_{k+1}-k+\ell+1,\al_{\ell +1}, \ld ,\al_{m-1})}
\,\cdot\, a_{(\be_1+1,\ld,\be_k+1,\be_{k+2},\ld ,\be_{m+1})} =0,
\end{equation}
where $\ell=\ell(k)$ is chosen so that $0\le \ell\le m-1$ and
\begin{equation}\label{ellk}
\al_{\ell}-1\ge\be_{k+1}-k+\ell+1\ge\al_{\ell +1},
\end{equation}
with the convention that $\al_0=\infty$, $\al_m=-\infty$,
and so that $\be_{k+1}-k+\ell+1\ge 0$, (if
there is no such choice of $\ell$, then the term in the summation indexed by $k$ is
identically $0$). Note that, for each choice of $k$, then $\ell$ (if it exists) is
unique; to see this, let $\ga_i=\al_i-i$, $i=0,\ld ,m$.
Then $\infty =\ga_0>\ga_1>\cdots >\ga_m=-\infty$, and so for all real numbers $x$, there
is a unique $0\le i\le m-1$ so that $\ga_i>x\ge\ga_{i+1}$. Then, rewriting~\eqref{ellk},
 $\ell$ is the unique choice of $i$ so that (more restrictively so there may not
be such an $i$) $\ga_i-2\ge x\ge\ga_{i+1}$, with $x=\be_{k+1}-k$.

The presentation of the Pl\"ucker relations given in~\eqref{plucker} above 
is referred to as ``classical'' by Fulton~\cite[p.~133]{fu}.
In this presentation,
each equation is a quadratic alternating summation corresponding to an ordered pair of partitions.
Each term in the alternating summation arises from removing a single part from the
second partition, and inserting it into the first partition, with some appropriate
shift in the remaining parts of both partitions. In our next result, we give a different
presentation of the Pl\"ucker relations, which is more symmetrical in its form, using
the notation developed in Section~\ref{sec2} for codes of partitions.

\begin{thm}\label{sympluck}
The \textit{Pl\"ucker relations} for $a_{\cP}$ are given by the following system
of simultaneous quadratic equations: for all $\al,\be\in\cP$,  we have
\begin{equation*}
\sum_{\substack{i,j\ge 1\\|\al^{(i)}|+|\be^{(-j)}|=|\al |+|\be |+1}}
(-1)^{|\al|-|\al^{(i)}|+i+j}a_{\al^{(i)}}\cdot a_{\be^{(-j)}}=0.
\end{equation*}
\end{thm}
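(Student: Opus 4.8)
The plan is to prove the equivalence of the two presentations by reindexing the classical relations~\eqref{plucker} directly into code notation. Fix partitions $\al,\be$ and an admissible $m$ (so $l(\al)\le m-1$ and $l(\be)\le m+1$), and substitute $j=k+1$ for the summation index $k$ running over $0,\ld,m$. First I would identify the two factors of the $k$th summand: comparing the second factor $(\be_1+1,\ld,\be_k+1,\be_{k+2},\ld,\be_{m+1})$ with~\eqref{formminisw} shows it is exactly $\be^{(-j)}$, while comparing the first factor with~\eqref{formisw} shows it is $\al^{(i)}$, where the inserted part $\be_{k+1}-k+\ell+1$ equals $i-1$ and the index $\ell$ is forced to be $u_i(\al)$ because condition~\eqref{ellk} reads $\al_{\ell}\ge i>\al_{\ell+1}$, the defining inequality for $u_i(\al)$. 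Thus every admissible summand becomes $a_{\al^{(i)}}\cdot a_{\be^{(-j)}}$.

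Next I would check that the index conditions and signs match. Using $|\al^{(i)}|=|\al|-u_i(\al)+i-1$ from~\eqref{ij} together with $|\be^{(-j)}|=|\be|-\be_j+j-1$, a short computation turns the relation $i=\be_{k+1}-k+\ell+2$ into the single weight equation $|\al^{(i)}|+|\be^{(-j)}|=|\al|+|\be|+1$; hence the admissibility of $\ell$ in~\eqref{ellk} with the nonnegativity $\be_{k+1}-k+\ell+1\ge0$ is precisely the existence of an $i\ge1$ obeying the weight constraint of Theorem~\ref{sympluck}, and uniqueness of $\ell$ (hence of $i$) follows since both $|\al^{(i)}|$ and $|\be^{(-j)}|$ are strictly monotone in their indices by~\eqref{laiinc} and~\eqref{laminiinc}. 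For the sign, feeding $u_i(\al)=\ell$ into $(-1)^{|\al|-|\al^{(i)}|+i+j}$ via~\eqref{ij} yields $(-1)^{\ell+k}$, whereas the classical sign is $(-1)^{k-m+1+\ell}=(-1)^{m+1}(-1)^{\ell+k}$; the two agree up to the global factor $(-1)^{m+1}$, which is irrelevant for an equation set to zero.

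The remaining and genuinely delicate point is the parameter $m$, which has no analogue on the symmetric side. Here I would first show the symmetric sum over all $i,j\ge1$ is finite: writing $\delta_i:=|\al^{(i)}|-|\al|$ and $\epsilon_j:=|\be^{(-j)}|-|\be|$, both are strictly increasing (by~\eqref{laiinc} and~\eqref{laminiinc}) with $\delta_i\ge\delta_1=-l(\al)$ and $\epsilon_j\to\infty$, so the weight equation $\delta_i+\epsilon_j=1$ forces $\epsilon_j\le l(\al)+1$ and thereby bounds $j$ by roughly $l(\al)+2$. For any admissible $m$ we have $m+1$ at least this bound, so the classical range $k=0,\ld,m$ (that is, $j=1,\ld,m+1$) captures every nonvanishing symmetric term and nothing else, the matching of the vanishing terms being guaranteed by the equivalence of index conditions above. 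Consequently the classical relation for \emph{every} admissible $m$ equals $(-1)^{m+1}$ times the symmetric relation for $(\al,\be)$; since each pair $(\al,\be)$ admits some $m$ (e.g.\ $m=\max(l(\al)+1,l(\be)-1)$) and each classical triple $(m,\al,\be)$ projects onto a pair, the two systems are termwise equivalent. I expect this bookkeeping around $m$ — establishing finiteness of the symmetric sum and that no admissible range adds or drops a nonvanishing term — to be the main obstacle, the reindexing, weight, and sign computations being routine once~\eqref{formisw} and~\eqref{formminisw} are in hand.
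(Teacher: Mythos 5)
Your proposal is correct and follows essentially the same route as the paper: both translate the classical relations~\eqref{plucker} term-by-term into code notation via~\eqref{formisw} and~\eqref{formminisw}, identify $\ell$ with $u_i(\al)$, match the signs through~\eqref{ij} up to a global factor of $(-1)^{m\pm 1}$, and verify the weight identity $|\al^{(i)}|+|\be^{(-j)}|=|\al|+|\be|+1$. The only difference is in the bookkeeping for $m$: where you bound the nonvanishing symmetric terms by $j\le\max(l(\al)+2,\,l(\be))\le m+1$ so that any admissible $m$ captures them all, the paper instead shows the equations for $(m,\al,\be)$ and $(m+1,\al,\be)$ coincide after normalizing by $(-1)^{m-1}$ (the extra $k=m+1$ term admits no valid $\ell$ satisfying~\eqref{ellk}), and then passes to the single stabilized equation~\eqref{interlk}.
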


\begin{proof}
In the Pl\"ucker relations, equation~\eqref{plucker} is satisfied for each $(m,\al,\be)$ for
 $m\ge 1$ and $\al,\be\in\cP$ with $l(\al)\le m-1$, $l(\be)\le m+1$.
Now multiply~\eqref{plucker} by $(-1)^{m-1}$, to get equation~\eqref{plucker}', and
 consider equation~\eqref{plucker}' for $(m+1,\al,\be)$,
 where we have $\al_m=\be_{m+2}=0$. Then on the left hand side, the term indexed by $k=m+1$ in the latter
equation is identically $0$, since there is no possible choice of $\ell$ (to see
this, we must have $\be_{k+1}-k+\ell+1\ge 0$, so $\ell\ge m$, and since $0\le\ell\le m$, we must uniquely
have $\ell=m$; but then we
have $\al_{\ell}-1=-1<0=\be_{k+1}-k+\ell+1$, contradicting equation~\eqref{ellk}).
Thus equation~\eqref{plucker}' for $(m,\al,\be)$ is identical to
equation~\eqref{plucker}' for $(m+1,\al,\be)$,
so there is the following single equation for
each $\al=(\al_1,\al_2,\ld ),\be=(\be_1,\be_2,\ld )\in\cP$ (which
means that $\al_i=0$ for $i> l(\al )$, and $\be_i=0$ for $i> l(\be )$):
\begin{equation}\label{interlk}
\sum_{k\ge 0}(-1)^{k+\ell}
a_{(\al_1-1,\ld,\al_{\ell}-1,\be_{k+1}-k+\ell+1,\al_{\ell +1}, \ld )}
\,\cdot\, a_{(\be_1+1,\ld,\be_k+1,\be_{k+2},\ld )} =0,
\end{equation}
where $\ell=\ell(k)$ is chosen so that
\begin{equation*}
\al_{\ell}-1\ge\be_{k+1}-k+\ell+1\ge\al_{\ell +1},
\end{equation*}
with the convention that $\al_0=\infty$,
and so that $\be_{k+1}-k+\ell+1\ge 0$.
But, from~\eqref{formisw} and~\eqref{formminisw}, equation~\eqref{interlk} becomes
\begin{equation*}
\sum_{k\ge 0}(-1)^{k+\ell}
a_{\al^{(\be_{k+1}-k+\ell+2)}}\cdot  a_{\be^{(-k-1)}}=0.
\end{equation*}
Finally, note that
\begin{equation*}
\vert \al^{(\be_{k+1}-k+\ell+2)}\vert +\vert \be^{(-k-1)}\vert =
|\al|+|\be|+1,
\end{equation*}
and the result follows from~\eqref{ij},~\eqref{laiinc}, and~\eqref{laminiinc}.
\end{proof}

To fix ideas, we now give a few examples of Pl\"ucker relations. These
examples illustrate that there are redundant equations in the Pl\"ucker relations.
\begin{exa}\label{pluckexs}
For $\al=\be=(1)$, we obtain $\al^{(1)}=\vep$, $\al^{(2)}=(1,1)$, $\al^{(3)}=(2,1)$,
and $\be^{(-1)}=\vep$, $\be^{(-2)}=(2)$, $\be^{(-3)}=(2,1)$, so the corresponding
quadratic equation is
$$-a_{\vep}\cdot a_{(2,1)}+a_{(2,1)}\cdot a_{\vep}=0.$$
But the left hand side of this equation is identically $0$, so the equation is redundant.
\vspace{.1in}

\noindent
For $\al=\vep$, $\be=(1,1,1)$, we obtain $\al^{(1)}=\vep$, $\al^{(2)}=(1)$, $\al^{(3)}=(2)$,
$\al^{(4)}=(3)$,
and $\be^{(-1)}=(1,1)$, $\be^{(-2)}=(2,1)$, $\be^{(-3)}=(2,2)$, so the corresponding
quadratic equation is
\begin{equation}\label{canonpluck}
a_{\vep}\cdot a_{(2,2)}-a_{(1)}\cdot a_{(2,1)}+a_{(2)}\cdot a_{(1,1)}=0.
\end{equation}

\vspace{.1in}

\noindent
For $\al=(2)$, $\be=(1)$, we obtain $\al^{(1)}=(1)$, $\al^{(2)}=(1,1)$, $\al^{(3)}=(2,2)$,
and $\be^{(-1)}=\vep$, $\be^{(-2)}=(2)$, $\be^{(-3)}=(2,1)$, so the corresponding
quadratic equation is
$$-a_{(1)}\cdot a_{(2,1)}+a_{(1,1)}\cdot a_{(2)}+a_{(2,2)}\cdot a_{\vep}=0,$$
which is the same equation as~\eqref{canonpluck}.
\end{exa}

\section{$\tau$-functions for the KP hierarchy and the Pl\"ucker relations}\label{sec5}

There are a number of equivalent descriptions of the KP hierarchy, as
is well described in~\cite{mjd}. The one that we shall start with in this
paper involves the so-called $\tau$-function of the hierarchy, and
two independent sets of indeterminates $p_1,p_2,\ldots$ and $\ph_1,\ph_2,\ldots$. A power series
is said to be a $\tau$-\emph{function for the KP hierarchy} if and only if it
satisfies the bilinear equation
\begin{equation}\label{kpbilinearphys}
[t^{-1}]\exp
\left(\sum_{k\ge 1}\frac{t^k}{k}\left( p_k-\ph_k\right)\right)
\exp
\left(-\sum_{k\ge 1}t^{-k}\left( \frac{\del}{\del p_k}-\frac{\del}{\del \ph_k}\right)\right)
\tau(\bp )\tau(\bph )=0,
\end{equation}
where $\bp=(p_1,p_2,\ldots )$, $\bph=(\ph_1,\ph_2,\ldots )$, and we
use the notation $[A]B$ to mean the \emph{coefficient} of $A$ in $B$.

In considering equation~\eqref{kpbilinearphys}, we regard $p_1,p_2,\ldots$ as the power
sum symmetric functions of an underlying set of variables $x_1,x_2,\ldots$, as in Section~\ref{sec4}.
Also, we regard $\ph_1,\ph_2,\ldots$ as the power
sum symmetric functions of another set of variables, algebraically independent from $x_1,x_2,\ld $
 (one could write this other set of variables as $\xh_1,\xh_2,\ld $, say, but
the precise names of these variables is irrelevant, since no
further explicit mention of either $x_1,x_2,\ld $ or $\xh_1,\xh_2,\ld $ will be made).
We shall use the notation $\eh_i$, $\hh_i$, $\sh_{\la}$, $\Bh(t)$ to denote the
symmetric functions in this other set of variables corresponding
to $e_i$, $h_i$, $s_{\la}$, $B(t)$ in $x_1,x_2,\ldots$.

The following result follows immediately by taking this symmetric function
view of~\eqref{kpbilinearphys}. This view has appeared in a number of
earlier works (see, e.g.,~\cite{jy}), but the results from symmetric functions that
have been applied in these works have been different than ours.
\begin{pro}
A power series is a $\tau$-function for the KP hierarchy
if and only if it satisfies
\begin{equation}\label{BKPform}
[t^{-1}] \bigg( B(t)\tau (\bp)\bigg)\cdot\bigg(\Bh^{\perp}(t^{-1})\tau(\bph )\bigg) =0.
\end{equation}
\end{pro}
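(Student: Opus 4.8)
The plan is to show that the physics-style bilinear equation \eqref{kpbilinearphys} is literally a restatement of \eqref{BKPform} once we translate each exponential factor into a Bernstein operator. The key observation is that the two exponentials in \eqref{kpbilinearphys} factor cleanly into a product of an operator acting only on the $\bp$-variables and an operator acting only on the $\bph$-variables, because the two sets of power sums are algebraically independent and the operators $\del/\del p_k$ and $\del/\del \ph_k$ commute with the $\ph$- and $p$-variables respectively.

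First I would split each of the two exponentials in \eqref{kpbilinearphys} using $p_k-\ph_k$ and $\del/\del p_k - \del/\del \ph_k$ and the fact that the $p$- and $\ph$-parts commute. The first exponential becomes $\exp(\sum_k \tfrac{t^k}{k}p_k)\cdot\exp(-\sum_k\tfrac{t^k}{k}\ph_k)$, and the second becomes $\exp(-\sum_k t^{-k}\tfrac{\del}{\del p_k})\cdot\exp(\sum_k t^{-k}\tfrac{\del}{\del \ph_k})$. Regrouping the four factors into those acting on the $\bp$-variables and those acting on the $\bph$-variables, and using \eqref{pnperp} to write $\del/\del p_k = \tfrac1k p_k^\perp$, I would recognize the $\bp$-factors as exactly $B(t)$ from \eqref{Bernp}, acting on $\tau(\bp)$.

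The heart of the argument is to identify the $\bph$-factors, namely $\exp(-\sum_k \tfrac{t^k}{k}\ph_k)\exp(\sum_k t^{-k}\tfrac{\del}{\del \ph_k})$, as $\Bh^\perp(t^{-1})$. For this I would compute $\Bh^\perp(t^{-1})$ directly from the definition \eqref{Taperpdef} of $B^\perp$: substituting $t\mapsto t^{-1}$ and re-expressing $e_m$, $h_k^\perp$ back in terms of power sums via \eqref{hewithp} and \eqref{pnperp} should yield exactly the $\bph$-exponential product above (with the appropriate signs, where $e_m$ contributes the alternating sign). The sign bookkeeping here is the one place where care is required, since \eqref{Taperpdef} carries the factor $(-1)^m$ and the substitution $t\to t^{-1}$ interchanges the roles of the creation and annihilation parts; I expect this sign matching to be the main obstacle, though it is routine.

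Having identified the two grouped operators as $B(t)$ acting on $\tau(\bp)$ and $\Bh^\perp(t^{-1})$ acting on $\tau(\bph)$, equation \eqref{kpbilinearphys} becomes $[t^{-1}]\,(B(t)\tau(\bp))\cdot(\Bh^\perp(t^{-1})\tau(\bph))=0$, which is precisely \eqref{BKPform}. Since every step is an identity between the two expressions, the ``if and only if'' is immediate: a power series satisfies \eqref{kpbilinearphys} exactly when it satisfies \eqref{BKPform}.
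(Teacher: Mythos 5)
Your proposal is correct and follows essentially the same route as the paper: the paper's proof is precisely the observation that, after splitting the two exponentials in~\eqref{kpbilinearphys} by the algebraic independence of the $p_k$ and $\ph_k$, the $\bp$-factors reassemble into $B(t)$ via~\eqref{Bernp} and the $\bph$-factors into $\Bh^{\perp}(t^{-1})$ via~\eqref{pnperp}. Your sign check on the $\bph$-side (that $\exp(-\sum_k \tfrac{t^k}{k}\ph_k)\exp(\sum_k t^{-k}\tfrac{\del}{\del \ph_k})$ equals $\Bh^{\perp}(t^{-1})$, whether computed from~\eqref{Taperpdef} or by taking adjoints in~\eqref{Bernp} using $p_k^{\perp}=k\,\tfrac{\del}{\del p_k}$) is exactly the routine verification the paper leaves implicit.
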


\begin{proof}
The result follows immediately from~\eqref{kpbilinearphys},
together with~\eqref{Bernp} and~\eqref{pnperp}.
\end{proof}

Next we give a new proof of the  connection  between Schur
function coefficients of a $\tau$-function for the KP hierarchy, and
the  Pl\"ucker relations. Our proof is immediate from Corollaries~\ref{Bsumsla} and~\ref{Bperpsumsla}.

\begin{thm}[see, e.g.,~{\cite[p. 90,~(10.3)]{mjd}}]\label{SchPlu}
Let the coefficient of the Schur function of shape $\la$ in a power series
be given by $a_{\la}$, $\la\in\cP$. Then the
power  series
is a $\tau$-function for the KP hierarchy if and only if $a_{\cP}=\{ a_{\la}:\la\in\cP\}$ satisfies
the Pl\"ucker relations.
\end{thm}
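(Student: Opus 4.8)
The plan is to expand the $\tau$-function in the Schur basis as $\tau(\bp)=\sum_{\la\in\cP}a_{\la}s_{\la}$, substitute into the bilinear characterization~\eqref{BKPform}, and use the two dual corollaries to rewrite both factors as explicit Schur-function expansions whose product's $[t^{-1}]$-coefficient is then read off directly. This should produce exactly the symmetrical code-notation form of the Pl\"ucker relations in Theorem~\ref{sympluck}, at which point the equivalence is immediate since Theorem~\ref{sympluck} has already been shown equivalent to the classical Pl\"ucker relations.

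\emph{First} I would apply Corollary~\ref{Bsumsla} to the first factor, obtaining
$$B(t)\,\tau(\bp)=\sum_{\be\in\cP}s_{\be}\sum_{k\ge 1}(-1)^{k-1}t^{\,|\be|-|\be^{(-k)}|}a_{\be^{(-k)}},$$
and apply Corollary~\ref{Bperpsumsla} (in the hatted variables, so $\tau(\bph)=\sum a_{\al}\sh_{\al}$) to the second factor, obtaining
$$\Bh^{\perp}(t^{-1})\,\tau(\bph)=\sum_{\al\in\cP}\sh_{\al}\sum_{m\ge 1}(-1)^{|\al|-|\al^{(m)}|+m-1}t^{\,|\al|-|\al^{(m)}|}a_{\al^{(m)}}.$$
\emph{Next} I would form the product of these two series. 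Since $p$ and $\ph$ live in algebraically independent variable sets, the products $s_{\be}\sh_{\al}$ are linearly independent, so vanishing of the $[t^{-1}]$-coefficient of the product is equivalent to vanishing of the coefficient of each $s_{\be}\sh_{\al}$ separately. \emph{Then} for fixed $\al,\be$ I would collect the power of $t$ from both factors, which is $(|\be|-|\be^{(-k)}|)+(|\al|-|\al^{(m)}|)$, and extract $t^{-1}$, giving the constraint $|\al^{(m)}|+|\be^{(-k)}|=|\al|+|\be|+1$ on the summation. Combining the two signs, $(-1)^{k-1}(-1)^{|\al|-|\al^{(m)}|+m-1}$, and relabelling $k\mapsto j$, $m\mapsto i$, should reproduce precisely the sign $(-1)^{|\al|-|\al^{(i)}|+i+j}$ and the summation condition appearing in Theorem~\ref{sympluck}.

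\emph{The main obstacle} will be the bookkeeping of signs and $t$-exponents to confirm that the extracted coefficient matches Theorem~\ref{sympluck} on the nose, rather than up to an overall sign or a shifted index. In particular I must check that the combined exponent of $(-1)$ reduces to $|\al|-|\al^{(i)}|+i+j$ modulo $2$ (using $|\be|-|\be^{(-k)}|\equiv|\be^{(-k)}|-|\be|$ and the identity $|\be^{(-j)}|-|\be|\equiv j-1\pmod 2$ implicit in~\eqref{laminiinc}), and that the constraint $|\al^{(i)}|+|\be^{(-j)}|=|\al|+|\be|+1$ emerges correctly from $[t^{-1}]$. Once the coefficient of $s_{\be}\sh_{\al}$ is seen to equal the left-hand side of the equation in Theorem~\ref{sympluck} for the pair $(\al,\be)$, the two-sided equivalence follows: $\tau$ satisfies~\eqref{BKPform} iff every such coefficient vanishes iff $a_{\cP}$ satisfies the symmetric Pl\"ucker relations, which by Theorem~\ref{sympluck} are the classical Pl\"ucker relations.
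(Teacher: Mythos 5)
Your proposal is correct and follows essentially the same route as the paper's own proof: substitute the Schur expansions into~\eqref{BKPform}, apply Corollaries~\ref{Bsumsla} and~\ref{Bperpsumsla} to the two factors, extract $[t^{-1}]$ of the product, use linear independence of the products $s_{\be}\,\sh_{\al}$, and invoke Theorem~\ref{sympluck}. One small remark: the sign bookkeeping you flag as the main obstacle is immediate, since $(-1)^{k-1}(-1)^{|\al|-|\al^{(i)}|+i-1}=(-1)^{|\al|-|\al^{(i)}|+i+k}$ directly, with no need for any parity identity relating $|\be^{(-j)}|-|\be|$ to $j$ (the one you cite is in fact false in general, but it is never needed).
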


\begin{proof}
We are given $\tau(\bp)=\sum_{\la\in\cP}a_{\la}s_{\la}$
 and $\tau(\bph)=\sum_{\la\in\cP}a_{\la}\sh_{\la}$.
Then, from~\eqref{BKPform}, it is necessary and
sufficient that $a_{\cP}$ satisfies $S(\bp ,\bph )=0$, where
$$S(\bp ,\bph )=
[t^{-1}] \bigg( B(t)\sum_{\la\in\cP}a_{\la}s_{\la}\bigg)\cdot
\bigg(\Bh^{\perp}(t^{-1})\sum_{\mu\in\cP}a_{\mu}\sh_{\mu}\bigg) .$$
Now, from Corollaries~\ref{Bsumsla} and~\ref{Bperpsumsla}, we immediately obtain
$$S(\bp ,\bph )=
\sum_{\be,\al\in\cP}s_{\be}\, \sh_{\al}
\sum_{\substack{m,k\ge 1\\|\al^{(m)}|+|\be^{(-k)}|=|\al |+|\be |+1}}
(-1)^{|\al|-|\al^{(m)}|+m+k}a_{\al^{(m)}}\cdot a_{\be^{(-k)}}.$$
But $S(\bp ,\bph )=0$ if and only if $[s_{\be}\sh_{\al}]S(\bp ,\bph )=0$ for
all $\be,\al\in\cP$, since  the Schur functions form a basis for symmetric functions,
and the result follows immediately from Theorem~\ref{sympluck}.
\end{proof}

Often the  KP hierarchy is written as a system of simultaneous quadratic partial
differential equations, for $\tau$.
In the next result, we apply Theorem~\ref{SchPlu} and the methods of symmetric functions
to obtain such a system of partial differential equations, with one equation corresponding
to each quadratic equation in the Pl\"ucker relations. The result is well known, but we
include a simple proof for completeness.

\begin{thm}[see, e.g.,~{\cite[p. 92, Lemma 10.2]{mjd}}]\label{pdeplu}
The power series $\tau(\bp)$ is a $\tau$-function for
the KP hierarchy if and only if
the following partial differential equation is satisfied
for each pair of partitions $\al$ and $\be$:
\[ \sum_{\substack{i,j\ge 1\\|\al^{(i)}|+|\be^{(-j)}|=|\al |+|\be |+1}}(-1)^{|\al|-|\al^{(i)}|+i+j}
            \left( s_{\al^{(i)}}(\bp^{\perp})\tau(\bp)\right)\cdot
\left(s_{\be^{(-j)}}(\bp^{\perp})\tau(\bp)\right) = 0. \]
(Where, e.g., $s_{\la}(\bp^{\perp})$ is interpreted as the partial differential
operator obtained by substituting $p_n^{\perp}$ for $p_n$ in~\eqref{psandsp} for each $n\ge 1$,
and using~\eqref{pnperp}.)
\end{thm}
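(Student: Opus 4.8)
The plan is to reduce the claim to two facts already in hand: the equivalence between being a KP $\tau$-function and satisfying the bilinear identity~\eqref{BKPform}, and the computation of that identity carried out in the proof of Theorem~\ref{SchPlu} by means of Corollaries~\ref{Bsumsla} and~\ref{Bperpsumsla}. The starting observation is that the operator written $s_{\la}(\bp^{\perp})$ is exactly $s_{\la}^{\perp}$, the adjoint of multiplication by $s_{\la}$: substituting $p_n^{\perp}=n\,\partial/\partial p_n$ for $p_n$ in~\eqref{psandsp} and using that the adjoint is multiplicative on the algebraically independent generators (so $p_{\mu}^{\perp}=\prod_i p_{\mu_i}^{\perp}$) identifies the two. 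Writing $\tau(\bp)=\sum_{\nu\in\cP}a_{\nu}s_{\nu}$ and setting
\[ A_{\mu}:=\big(s_{\mu}^{\perp}\tau\big)(\bp)\in\La_{\bbQ}, \]
the skew rule $s_{\mu}^{\perp}s_{\nu}=s_{\nu/\mu}$ together with the classical addition formula gives the key identity $\sum_{\mu\in\cP}A_{\mu}\,s_{\mu}(\bfz)=\tau(\bp+\bfz)$ in an auxiliary set of variables $\bfz$, where $\bp+\bfz$ denotes the union of variable sets (the addition of power sums).

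The main step is to run the computation of Theorem~\ref{SchPlu} verbatim, but with the \emph{scalars} $a_{\la}$ replaced by the symmetric functions $A_{\la}$. Corollaries~\ref{Bsumsla} and~\ref{Bperpsumsla} are $\bbQ$-linear in the coefficients $a_{\la}$, hence remain valid over any commutative $\bbQ$-algebra of coefficients, in particular over $\La_{\bbQ}$ itself, with $B(t)$ and $\Bh^{\perp}(t^{-1})$ acting on two further independent variable sets $\bfz$ and $\bfy$. Applying them to $\sum_{\mu}A_{\mu}s_{\mu}(\bfz)=\tau(\bp+\bfz)$ and $\sum_{\mu}A_{\mu}\sh_{\mu}(\bfy)=\tau(\bp+\bfy)$ and extracting $[t^{-1}]$ exactly as before yields
\[ [t^{-1}]\big(B(t)\tau(\bp+\bfz)\big)\big(\Bh^{\perp}(t^{-1})\tau(\bp+\bfy)\big)=\sum_{\al,\be\in\cP}s_{\be}(\bfz)\,\sh_{\al}(\bfy)\,P_{\al,\be}(\bp), \]
where $P_{\al,\be}(\bp)$ is precisely the left-hand side of the asserted partial differential equation for the pair $(\al,\be)$, the signs and the constraint $|\al^{(i)}|+|\be^{(-j)}|=|\al|+|\be|+1$ matching those in Theorem~\ref{SchPlu} under $(m,k)\leftrightarrow(i,j)$. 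Since the $s_{\be}(\bfz)\sh_{\al}(\bfy)$ form a basis, the whole system $\{P_{\al,\be}=0\}$ is equivalent to the vanishing of the displayed left-hand side for all $\bp$.

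Finally I would close the equivalence using~\eqref{BKPform}. If $\tau$ is a $\tau$-function then~\eqref{BKPform} holds for every pair of independent variable sets, in particular for $\bp+\bfz$ and $\bp+\bfy$, so the displayed expression vanishes and every $P_{\al,\be}$ is zero; conversely, if every $P_{\al,\be}$ vanishes then that expression is zero for all $\bp$, and specializing to the empty alphabet $p_1=p_2=\cdots=0$ recovers~\eqref{BKPform} in the variables $\bfz,\bfy$, so $\tau$ is a $\tau$-function. The main obstacle is the bookkeeping that justifies reusing Corollaries~\ref{Bsumsla} and~\ref{Bperpsumsla} with symmetric-function coefficients acting on separate variable sets, together with the verification of the addition formula $\sum_{\mu}A_{\mu}s_{\mu}(\bfz)=\tau(\bp+\bfz)$; once these are in place, the sign and degree conventions transfer from the proof of Theorem~\ref{SchPlu} without change, and one may record as a consistency check that taking the coefficient of $s_{\vep}$ in $\bp$ (which sends $A_{\mu}\mapsto a_{\mu}$ and is multiplicative on products by the grading) turns each $P_{\al,\be}$ into the symmetric Pl\"ucker relation of Theorem~\ref{sympluck}.
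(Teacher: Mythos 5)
Most of your reduction machinery is sound: the identification $s_{\la}(\bp^{\perp})=s_{\la}^{\perp}$, the addition formula $\sum_{\mu\in\cP}(s_{\mu}^{\perp}\tau)(\bp)\,s_{\mu}(\bfz)=\tau(\bp+\bfz)$ (which is the same fact the paper establishes in the form $[s_{\la}(\bp)]\tau(\bp+\bq)=s_{\la}(\bq^{\perp})\tau(\bq)$), and the re-use of Corollaries~\ref{Bsumsla} and~\ref{Bperpsumsla} with coefficients taken in $\La_{\bbQ}$ rather than $\bbQ$ are all legitimate, and your converse direction (specialize $\bp=\bze$ to recover~\eqref{BKPform}) is exactly the paper's easy implication.

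But the forward direction --- $\tau$-function implies the system of PDEs --- has a genuine gap, and it sits precisely where the paper's proof does its real work. You assert that~\eqref{BKPform} ``holds for every pair of independent variable sets, in particular for $\bp+\bfz$ and $\bp+\bfy$.'' The alphabets $\bp+\bfz$ and $\bp+\bfy$ are \emph{not} independent: they share $\bp$. Equation~\eqref{BKPform} is an identity in two algebraically independent sets $\bp,\bph$; what you actually need is its image under the overlapping substitution $p_k\mapsto p_k+z_k$, $\ph_k\mapsto p_k+y_k$, and under that substitution the expression does \emph{not} turn directly into the one in your display, because in~\eqref{BKPform} the operator $B(t)$ multiplies and differentiates with respect to the original variables, whereas your display requires $B(t)$ to act only on $\bfz$ and $\Bh^{\perp}(t^{-1})$ only on $\bfy$. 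The derivative parts do transport across the shift by the chain rule, but the multiplication parts do not: the substituted first factor carries an extra $\exp\left(\sum_{k\ge 1}t^kp_k/k\right)$, the substituted second factor an extra $\exp\left(-\sum_{k\ge 1}t^kp_k/k\right)$, and one must argue that these cancel in the product before extracting $[t^{-1}]$. This cancellation is exactly the content of the paper's computation with $\Ga$, $\Up$ and the Taylor-shift operator $\Ta$, culminating in $\Ga(-\bq)\Ga(\bq)=1$ and the equivalence of~\eqref{BKPform} with the shifted bilinear identity~\eqref{kpbilinearq}; it is the missing idea in your write-up, and without it the ``in particular'' is a non sequitur and the hard half of the theorem is unproved. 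Once you supply that conjugation argument, your proof coincides in substance with the paper's, differing only in that you do the coefficient bookkeeping through the addition formula rather than through the paper's inner-product calculation.
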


\begin{proof}
Let $\bq = (q_1, q_2, ...)$, where the $q_i$ are independent from the $p_j$ and $\ph_k$. 
We begin the proof by proving that (I): $\tau$ satisfies equation~\eqref{BKPform} if and
only if (II): $\tau$ satisfies
\begin{equation} \label{kpbilinearq}
[t^{-1}] \bigg( B(t)\tau (\bp +\bq )\bigg) 
\cdot
\bigg( \Bh^{\perp}(t^{-1})\tau (\bph +\bq )\bigg)
=0,
\end{equation}
for all $\bq$.

It is easy to see that (II) implies (I), by setting $\bq=\bze$ for $i\ge 1$ (where $\bze =(0,0,\ld )$).

To prove that (I) implies (II),
define the operator $\Ta (\bp)=\exp\left( \sum_{k \geq 1} q_k \frac{\del}{\del p_k} \right)$. 
Using the multivariate Taylor
series expansion of an arbitrary formal power series $f(\bp)$, we see that
\begin{equation}\label{taylor}
f(\bp+\bq) = \Ta (\bp) f(\bp).
\end{equation}
Also define $\Ga (\bp) =\exp\left( \sum_{i \geq 1} \frac{t^i}{i}p_i\right)$, 
 $\Up (\bp)=\exp\left(- \sum_{j \geq 1} t^{-j} \frac{\del}{\del p_j} \right)$,
so $B(t)=\Ga(\bp )\Up(\bp )$.
Then we have
$$
B(t)\tau (\bp +\bq )
=\Ga(\bp )\Up(\bp )\Ta(\bp )\tau(\bp )
=\Ga(\bp )\Ta(\bp )\Up(\bp )\tau(\bp ),$$
from~\eqref{taylor} and the trivial fact that $\Up(\bp )$ commutes with $\Ta(\bp )$.
But using~\eqref{taylor} again, we have the operator identity
$$\Ta(\bp )\Ga(\bp )=\Ga(\bp +\bq)\Ta(\bp )=\Ga(\bq )\Ga(\bp )\Ta(\bp ),$$
and combining these expressions and the fact that $\Ga(\bq )^{-1}=\Ga(-\bq )$ gives
\begin{equation*}
B(t)\tau (\bp +\bq )=\Ga(-\bq )\Ta(\bp )B(t)\tau(\bp ).
\end{equation*}
Similarly we have $\Bh^{\perp}(t^{-1})=\Ga(-\bph )\Up(-\bph )$, and then obtain
\begin{equation*}
\Bh^{\perp}(t^{-1})\tau (\bph +\bq )=\Ga(\bq )\Ta(\bph )\Bh^{\perp}(t^{-1})\tau(\bph ).
\end{equation*}
Multiplying these two expressions together, we find that equation~\eqref{kpbilinearq} becomes
$$
\Ta(\bp )\Ta(\bph )[t^{-1}] \bigg( B(t)\tau (\bp) \bigg)
\cdot
\bigg( \Bh^{\perp}(t^{-1})\tau (\bph )\bigg) =0,$$
since $\Ga(-\bq )\Ga(\bq )=1$, and $\Ta(\bp )$, $\Ta(\bph )$ are independent of $t$.  We conclude
that (I) implies (II).

Finally, in order to apply Theorem~\ref{SchPlu}, we determine the
coefficient of the Schur function of shape~$\la$. This gives
\[ [s_{\la}(\bp )]\tau(\bp +\bq )=  \langle s_{\la}(\bp ), \tau(\bp +\bq )\rangle
 = \langle 1,s_\la(\bp^{\perp}) \tau(\bp+ \bq) \rangle =
\left. s_\la(\bp^{\perp}) \tau(\bp+ \bq)\right|_{\bp =\bze}= 
                                         s_\la(\bq^{\perp}) \tau(\bq), \]
and the result then follows from Theorem~\ref{SchPlu}, replacing~$\bq$ by~$\bp$.
\end{proof}

As an example of Theorem~\ref{pdeplu}, we now give one of the quadratic partial differential equations
for a $\tau$-function.

\begin{exa}
Consider
the  Pl\"ucker equation~\eqref{canonpluck}. Now we have
$$s_{\vep}=1,\quad s_{(1)}=p_1,\quad s_{(2)}=\tfrac{1}{2} (p_1^2+p_2),\quad s_{(1,1)} = \tfrac{1}{2} (p_1^2-p_2),$$
$$s_{(2,1)}=\tfrac{1}{3}(p_1^3-p_3),\quad s_{(2,2)}=\tfrac{1}{12}(p_1^4-4p_1p_3+3p_2^2),$$
so from Theorem~\ref{pdeplu}, the partial differential equation for $\tau$ that corresponds to~\eqref{canonpluck}
is given by
\begin{equation}\label{primplu}
\tfrac{1}{12}\tau\,
\left( \tau_{1111}-12\tau_{13}+12\tau_{22}\right) 
-\tfrac{1}{3}\tau_1\left(\tau_{111}-3\tau_3\right)
+\tfrac{1}{4}\left(\tau_{11}+2\tau_2\right)\left(\tau_{11}-2\tau_2\right) 
=0,
\end{equation}
where we use $\tau_{ijk}$ to denote $\frac{\del}{\del p_i}\frac{\del}{\del p_j}\frac{\del}{\del p_k}\tau$, etc.
\end{exa}

Often, in the literature of integrable systems, the series $F=\log\tau$ is used instead of $\tau$ itself. This series $F$ is 
often referred to as a \emph{solution} to the KP hierarchy, where the ``KP hierarchy'' in 
this context refers to a system of simultaneous partial differential equations for $F$. Of course,
the system of partial differential equations for $\tau$ given in Theorem~\ref{pdeplu} becomes
an equivalent system of partial differential equations for $F$ by substituting $\tau=\exp F$ into
the equations of Theorem~\ref{pdeplu}, and then multiplying the equation by $\exp \left(-2F\right)$. For example,
when we apply this to~\eqref{primplu}, we obtain the equation
$$ \tfrac{1}{12}F_{1111}-F_{13}+F_{22}+\tfrac{1}{2}F_{11}^2 =0,$$
which is often referred to as ``the KP equation''.

\section*{Acknowledgements}

We thank Kevin Purbhoo, Richard Stanley and Ravi Vakil for helpful suggestions.

\end{document}